\documentclass{article}[12pt]%
\usepackage{amsmath}
\usepackage{amsfonts}
\usepackage{amssymb}
\usepackage{graphicx}%

\usepackage{xy} \xyoption{all}
\RequirePackage{color}
\usepackage{multicol}

\setcounter{MaxMatrixCols}{30}

\newtheorem{theorem}{Theorem}[section]

\newtheorem{corollary}[theorem]{Corollary}

\newtheorem{example}[theorem]{Example}

\newtheorem{lemma}[theorem]{Lemma}

\newtheorem{proposition}[theorem]{Proposition}

\newenvironment{proof}[1][Proof]{\noindent\textbf{#1.} }{\ \rule{0.5em}{0.5em}}
\begin{document}

\title{Existence of maximal ideals in Leavitt path algebras\footnotetext{2010 \textit{Mathematics Subject Classification}: 16D25, 16W50;
\textit{Key words and phrases:} Leavitt path algebras, arbitrary graphs, maximal ideals.}}
\author{Song\"{u}l ES\.{I}N$^{(1)}$, M\"{u}ge KANUN\.{I}$^{(2)}$ \\$^{(1)}$ T\"{u}ccarba\c{s}{\i}  sok. Ka\c{s}e apt. No: 10A/25 Erenk\"{o}y \\Kad\i k\"{o}y, \.{I}stanbul, Turkey.\\E-mail: songulesin@gmail.com\\$^{(2)\text{ }}$Department of Mathematics, D\"{u}zce University, \\Konuralp 81620 D\"{u}zce, Turkey.\\E-mail: mugekanuni@duzce.edu.tr\\}
\date{}
\maketitle

\begin{abstract}
Let $E$ be an arbitrary directed graph and let $L$ be the Leavitt path algebra
of the graph $E$ over a field $K$. The necessary and sufficient conditions are given to assure the existence of a maximal ideal in $L$ and also the necessary and sufficient conditions on the graph which assure that every ideal is contained in a maximal ideal is given.
It is shown that if a maximal ideal $M$ of $L$ is non-graded, 
then the largest graded ideal in $M$, namely $gr(M)$, is also maximal among the graded ideals of $L$. 
Moreover, if $L$ has a unique maximal ideal $M$, then $M$ must be a graded ideal.
The necessary and sufficient conditions on the graph for which every maximal ideal is graded, is discussed.
\end{abstract}


\section{Introduction and Preliminaries} 
It is well-known that in a ring with identity, any ideal is contained in a maximal ideal, however for a non-unital ring the existence of a maximal ideal is not always guarantied. Also, any maximal ideal is not necessarily a prime ideal in a non-unital case. In this short note, for the particular example of a Leavitt path algebra (which is non-unital if the number of vertices of the graph on which it is constructed, is infinite) we discuss the existence of maximal ideals and its characterization via the graph properties.   

The outline of the paper is as follows: 
We give preliminary definitions in the introduction and in Section 2, we discuss maximal and prime ideals in a non-unital ring. Although a Leavitt path algebra does not necessarily have a unit, any maximal ideal is always a prime ideal.  It is shown that if a maximal ideal $M$ of $L$ is non-graded, then the largest graded ideal in $M$, namely $gr(M)$, is also maximal among the graded ideals of $L$. 
In Section 3, first we prove that in a Leavitt path algebra, a maximal ideal exists if and only if there is a maximal hereditary and saturated subset of $E^0$.
We also prove 
the necessary and sufficient condition on the graph which assure that every ideal is contained in a maximal ideal. 
Theorem \ref{UniqueMaxisGraded} states that if $L$ has a unique maximal ideal $M$, then $M$ must be a graded ideal.
Finally, we give the necessary and sufficient conditions on the graph for which every maximal ideal is graded. 

Leavitt path algebras are introduced about a decade ago by \cite{AA} and \cite{AMP} as an algebra defined over a directed graph and since then, it has been a fruitful structure of study because of its close connection to the graph C$^{\ast}$-algebras.

For the general notation, terminology and results in
Leavitt path algebras, we refer to the recently published book \cite{AAS}, and 
for basic results in associative rings and modules, we refer to \cite{AF}. 

A directed graph $E=(E^{0},E^{1},r,s)$ consists of $E^{0}$ the set of \textit{vertices} and
$E^{1}$ the set of \textit{edges}, together with the \textit{range} and the \textit{source} maps $r,s:E^{1}\rightarrow E^{0}$. 

A vertex $v$ is called a \textit{sink} if it emits no edges and a vertex $v$
is called a \textit{regular vertex} if it emits a non-empty finite
set of edges. An \textit{infinite emitter} is a vertex which emits infinitely
many edges. A graph $E$ is called \textit{finite}, if both $E^0$ and $E^1$ are finite, and is called
\textit{row-finite} if the number of edges that each vertex emits is finite. 

For each $e\in E^{1}$, we call $e^{\ast}$ a \textit{ghost edge}. We let
$r(e^{\ast})$ denote $s(e)$, and we let $s(e^{\ast})$ denote $r(e)$. A\textit{
path} $\mu$ of length $|\mu|=n>0$ is a finite sequence of edges $\mu
=e_{1}e_{2}\cdot\cdot\cdot e_{n}$ with $r(e_{i})=s(e_{i+1})$ for all
$i=1,\cdot\cdot\cdot,n-1$. In this case $\mu^{\ast}=e_{n}^{\ast}\cdot
\cdot\cdot e_{2}^{\ast}e_{1}^{\ast}$ is the corresponding ghost path. A vertex
is considered a path of length $0$. The set of all vertices on the path $\mu$
is denoted by $\mu^{0}$.

A path $\mu$ $=e_{1}\dots e_{n}$ in $E$ is \textit{closed} if $r(e_{n})=s(e_{1})$, 
in which case $\mu$ is said to be \textit{based at the vertex
}$s(e_{1})$. A closed path $\mu$ is called \textit{simple} provided that
it does not pass through its base more than once, i.e., $s(e_{i})\neq
s(e_{1})$ for all $i=2,...,n$. The closed path $\mu$ is called a
\textit{cycle} if it does not pass through any of its vertices twice, that is,
if $s(e_{i})\neq s(e_{j})$ for every $i\neq j$.

An \textit{exit }for a path $\mu=e_{1}\dots e_{n}$ is an edge $e$ such that
$s(e)=s(e_{i})$ for some $i$ and $e\neq e_{i}$. We say the graph $E$ satisfies
\textit{Condition (L) }if every cycle in $E$ has an exit. The graph $E$ is
said to satisfy \textit{Condition (K)} if every vertex which is the base of a
closed path $c$ is also a base of another closed path $c^{\prime}$ different
from $c$.
A cycle $c$ in a graph $E$ is called a \textit{cycle without} K, if no vertex
on $c$ is the base of another distinct cycle in $E$ (where distinct cycles
possess different sets of edges and different sets of vertices).
\medskip

If there is a path from vertex $u$ to a vertex $v$, we write $u\geq v$. 

A subset $M$ of $E^0$ is said to be a \textit{maximal tail} if it satisfies the following conditions:  \\
(MT-1) If $v \in M$ and $u \in E^0$ with $u\geq v$, then $u \in M$. \\
(MT-2) If $v \in M$ is a regular vertex, then there exists an edge $e$, with  $s(e)=v$ such that $r(e) \in M$.\\
(MT-3) for any two $u,v\in M$ there exists $w\in M$ such that $u\geq w$ and $v\geq w$. \\
A subset $D$ of $E^0$ 
is called \textit{downward directed} if it satisfies (MT-3) property.    

For any vertex $v$, define $M(v)=\{w\in E^{0}: w\geq v\}.$
\medskip

A subset $H$ of $E^{0}$ is called \textit{hereditary} if, whenever $v\in H$ and
$w\in E^{0}$ satisfy $v\geq w$, then $w\in H$. A hereditary set is
\textit{saturated} if, for any regular vertex $v$, $r(s^{-1}(v))\subseteq H$
implies $v\in H$. The set of all hereditary saturated subsets of $E^{0}$ is denoted by
$\mathcal{H}_E$, which is also a partially ordered set by set inclusion.

Given an arbitrary graph $E$ and a field $K$, the \textit{Leavitt path algebra
}$L_{K}(E)$ is defined to be the $K$-algebra generated by a set $\{v:v\in
E^{0}\}$ of pair-wise orthogonal idempotents together with a set of variables
$\{e,e^{\ast}:e\in E^{1}\}$ which satisfy the following conditions:

\begin{enumerate}
\item[(1)] $s(e)e=e=er(e)$ for all $e\in E^{1}$.

\item[(2)] $r(e)e^{\ast}=e^{\ast}=e^{\ast}s(e)$\ for all $e\in E^{1}$.

\item[(3)] (CK-1 relations) For all $e,f\in E^{1}$, $e^{\ast}e=r(e)$ and
$e^{\ast}f=0$ if $e\neq f$.

\item[(4)] (CK-2 relations) For every regular vertex $v\in E^{0}$,
\[
v=\sum_{\{e\in E^{1},\ s(e)=v\} }ee^{\ast}.
\]

\end{enumerate}

Recall that a ring $R$ is said to have \textit{a set of local units} $F$, where $F$ is a set of idempotents
in $R$ having the property that, for each finite subset $r_1 ,\ldots,r_n$ of $R$, there exists $f \in F$ with $fr_if=r_i$ for
all $1 \leq i \leq n$. 

In the case of Leavitt path algebras, 
for each $x \in L_K(E)$ there exists a finite set of distinct vertices $V(x)$ for which $x = f x f$ , where
$f = \sum_{v \in V(x)} v$. When  $E^0$ is finite, $L_{K}(E)$ is a ring with unit element 
$\displaystyle 1= \sum_{v\in E^{0}}v$. Otherwise, $L_{K}(E)$ is not a unital ring, but is a ring with local units 
consisting of sums of distinct elements of $E^0$.

Every Leavitt path algebra $L_{K}(E)$ is a $\mathbb{Z}$-graded algebra 
$L_{K}(E)={\displaystyle\bigoplus\limits_{n\in\mathbb{Z}}}L_{n}$ 
induced by defining, for all $v\in E^{0}$ and $e\in E^{1}$, $\deg
(v)=0$, $\deg(e)=1$, $\deg(e^{\ast})=-1$. Further, for each 
$n\in \mathbb{Z}$, the homogeneous component $L_{n}$ is given by
\[L_{n}=\left\{{\textstyle\sum}
k_{i}\alpha_{i}\beta_{i}^{\ast}\in L:\text{ }|\alpha_{i}|-|\beta
_{i}|=n\right\}  .
\]
An ideal $I$ of $L_{K}(E)$ is said to be a \textit{graded ideal} if 
$I={\displaystyle\bigoplus\limits_{n\in\mathbb{Z}}}(I\cap L_{n})$.

\medskip

Throughout this paper, $E$ will denote an arbitrary directed graph with no restriction
on the number of vertices and the number of edges emitted by each vertex and
$K$ will denote an arbitrary field. For convenience in notation, we will
denote, most of the times, the Leavitt path algebra $L_{K}(E)$ by $L$. 

All ideals of concern in the sequel will be two-sided ideals. 
In this note, we use both the lattice of all proper ideals, denoted by $\mathcal{L}$, and 
the lattice of all graded proper ideals, denoted by $\mathcal{L}_{gr}$. 
Hence, maximal ideal means a maximal element in $\mathcal{L}$, which can be either a graded or a non-graded ideal; whereas 
a graded maximal ideal is a maximal element in $\mathcal{L}_{gr}$.

\medskip

We shall be using the following concepts and results from \cite{T}. A
\textit{breaking vertex }of a hereditary saturated subset $H$ is an infinite
emitter $w\in E^{0}\backslash H$ with the property that $0<|s^{-1}(w)\cap
r^{-1}(E^{0}\backslash H)|<\infty$. The set of all breaking vertices of $H$ is
denoted by $B_{H}$. For any $v\in B_{H}$, $v^{H}$ denotes the element
$v-\sum_{\{s(e)=v,r(e)\notin H \} }ee^{\ast}$. Given a hereditary saturated subset
$H$ and a subset $S\subseteq B_{H}$, the pair $(H,S)$ is called an \textit{admissible
pair.} The set $\mathbf{H}$ of all admissible pairs becomes a lattice under a
partial order $\leq^{\prime}$ under which $(H_{1},S_{1})\leq^{\prime}
(H_{2},S_{2})$ if $H_{1}\subseteq H_{2}$ and $S_{1}\subseteq H_{2}\cup S_{2}$, (also \cite[Proposition 2.5.6]{AAS}).
Given an admissible pair $(H,S)$, the ideal generated by $H\cup\{v^{H}:v\in
S\}$ is denoted by $I(H,S)$. 

It was shown in \cite{T} that the graded ideals
of $L_{K}(E)$ (i.e. elements of $\mathcal{L}_{gr}$) are precisely the ideals of the form $I(H,S)$ for some
admissible pair $(H,S)$. 
By \cite[Proposition 2.5.4]{AAS},  
for $(H_1, S_1), (H_2, S_2) \in \mathbf{H}$, we have
\[ (H_1, S_1) \leq^{\prime} (H_2, S_2) \iff I(H_1, S_1) \subseteq I(H_2, S_2). \]
In particular, $\subseteq$ is a partial order on  $\mathcal{L}_{gr}$.

\bigskip

Moreover, $L_{K}(E)/I(H,S)\cong L_{K}(E\backslash
(H,S))$. Here $E\backslash(H,S)$ is the \textit{quotient graph of }$E$ in
which 
$(E\backslash(H,S))^{0}=(E^{0}\backslash H)\cup\{v^{\prime}:v\in B_{H}\backslash S\}$ and 
$(E\backslash(H,S))^{1}=\{e\in E^{1}:r(e)\notin H\}\cup\{e^{\prime}:e\in E^{1},r(e)\in B_{H}\backslash S\}$ and
$r,s$ are extended to $(E\backslash(H,S))^{0}$ by setting $s(e^{\prime})=s(e)$
and $r(e^{\prime})=r(e)^{\prime}$. 
The necessary and sufficient condition for all ideals of $L_{K}(E)$ to be graded is that the graph $E$ satisfies condition (K), \cite[Theorem 6.16]{T}. 
On the other hand, a description of non-graded ideals of $L_{K}(E)$ is given in \cite{R-2}.  

A useful observation is that every element $a$ of $L_{K}(E)$ can be written as
$a={\textstyle\sum\limits_{i=1}^{n}}k_{i}\alpha_{i}\beta_{i}^{\ast}$, 
where $k_{i}\in K$, $\alpha_{i},\beta_{i}$
are paths in $E$ and $n$ is a suitable integer. Moreover, 
$L_{K}(E)={\textstyle\bigoplus\limits_{v\in E^{0}}}
L_{K}(E)v={\textstyle\bigoplus\limits_{v\in E^{0}}}vL_{K}(E).$ 
Another useful fact is that if $p^{\ast}q\neq 0$, where $p,q$ are
paths, then either $p=qr$ or $q=ps$ where $r,s$ are suitable paths in $E$.

\medskip 


\section{Maximal ideals versus prime ideals} 
The ring of our concern in this sequel, namely the Leavitt path algebra,
is unital if $E^0$ is finite, and non-unital otherwise. 

In a unital ring, any maximal ideal is also a prime ideal. This is not necessarily true for a non-unital ring as the following example shows.
For completeness of the argument, we will quote the discussion from \cite[pp.86-87]{Ranga1} 
on the maximal and the prime ideals in non-unital Leavitt path algebras.  
\begin{example}
\rm Take the non-unital ring $2\mathbb{Z}$, and its ideal $4\mathbb{Z}$, clearly $4\mathbb{Z}$ is a maximal ideal, but not a prime ideal in $2\mathbb{Z}$.  
\end{example}

\medskip

However, there are non-unital rings where any maximal ideal is also a prime ideal, and a Leavitt path algebra is an example of such.
\begin{proposition}
If $R$ is a ring satisfying $R^2=R$, then any maximal ideal is a prime ideal in $R$.
\end{proposition}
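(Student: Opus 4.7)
The plan is to argue by contradiction using the classical trick that only needs $R^2=R$ at one step. Let $M$ be a maximal ideal of $R$. Suppose, toward a contradiction, that $M$ is not prime, so there exist (two-sided) ideals $A$ and $B$ of $R$ with $AB\subseteq M$ but $A\not\subseteq M$ and $B\not\subseteq M$.

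Next I would invoke maximality of $M$. Since $M+A$ and $M+B$ are both ideals of $R$ that strictly contain $M$, and $M$ is maximal, we get $M+A=R=M+B$. Here I am using that $M$ is a proper ideal (which is part of the definition of maximal); I would note this explicitly to keep the proof clean.

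Now comes the step that uses the hypothesis $R^2=R$, which is the only place this assumption enters. The computation is
\[
R \;=\; R^{2} \;=\; (M+A)(M+B) \;\subseteq\; M^{2}+MB+AM+AB \;\subseteq\; M,
\]
since each of $M^{2}$, $MB$, $AM$ lies in $M$ (as $M$ is an ideal) and $AB\subseteq M$ by assumption. This forces $R\subseteq M$, contradicting that $M$ is proper. Hence one of $A\subseteq M$ or $B\subseteq M$ must hold, so $M$ is prime.

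There is no real obstacle here; the only subtlety worth flagging is the distinction between prime ideals defined via elements versus via ideals in the non-unital setting. Since $R$ may lack a unit, I would adopt the ideal-theoretic definition ($M$ prime means $AB\subseteq M \Rightarrow A\subseteq M$ or $B\subseteq M$ for ideals $A,B$), which is the standard choice in the non-unital literature and is exactly what the argument above delivers.
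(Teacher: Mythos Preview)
Your proof is correct and follows essentially the same approach as the paper: both use maximality to get $M+A=R=M+B$ and then compute $R=R^2=(M+A)(M+B)\subseteq M+AB$. The only cosmetic difference is that the paper phrases it directly (from $A\nsubseteq M$ and $B\nsubseteq M$ it concludes $AB\nsubseteq M$), whereas you frame it as a contradiction assuming $AB\subseteq M$; the underlying computation is identical.
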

\begin{proof}
Suppose $R^2=R$, and let $M$ be a maximal ideal of $R$ such that $A \nsubseteq M$ and $B \nsubseteq M$ for some ideals $A,B$ of $R$. 
Then $R=R^2=(M+A)(M+B)= M^2 + AM + MB + AB \subseteq M + AB$. Then $M +AB =R$, and $AB \nsubseteq M$. 
Thus $M$ is a prime ideal. 
\end{proof}

As any Leavitt path algebra $L$ is a ring with local units, $L^2 =L$ is satisfied, and the immediate corollary follows. 
\begin{corollary}\label{LPAmaxprime}
In a Leavitt path algebra, any maximal ideal is a prime ideal. 
\end{corollary}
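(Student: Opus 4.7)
The plan is to simply combine the preceding Proposition with the local unit structure recalled in the preliminaries. The Proposition already does the ring-theoretic heavy lifting: for any ring $R$ with $R^2 = R$, every maximal ideal is prime. So the only thing left is to verify that the hypothesis $L^2 = L$ holds for $L = L_K(E)$.

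First I would note the inclusion $L^2 \subseteq L$ is automatic. For the reverse inclusion, I would invoke the fact already recorded in the introduction: $L_K(E)$ has a set of local units consisting of finite sums of distinct vertices. Concretely, for any $x \in L$ there exists a finite subset $V(x) \subseteq E^0$ such that, setting $f = \sum_{v \in V(x)} v$, we have $fxf = x$. Since $f \in L$ and $xf \in L$, this expresses $x = f \cdot (xf) \in L \cdot L = L^2$. Therefore $L \subseteq L^2$, and so $L^2 = L$.

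Applying the Proposition to $R = L$ then immediately yields that every maximal ideal of $L$ is a prime ideal, which is the statement of the corollary. There is no real obstacle here; the only thing to be careful about is that we are using the two-sided local unit property ($fxf = x$), not merely a one-sided version, so that we genuinely write $x$ as a product of two elements of $L$.
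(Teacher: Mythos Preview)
Your proof is correct and follows exactly the paper's approach: invoke the preceding Proposition after observing that local units force $L^2 = L$. You have simply spelled out the one-line justification (writing $x = f \cdot (xf)$) that the paper leaves implicit.
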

 
Hence, in order to study the maximal ideals of a Leavitt path algebra $L$, we will turn our attention to the prime spectrum of $L$. Prime ideals of Leavitt path algebras has been studied in \cite{Ranga1} and the characterization of both graded and non-graded prime ideals is given. 
We state the result that we will frequently use throughout this note.  

\begin{theorem}(\cite[Theorem 3.12]{Ranga1})\label{RangaPrime} Let $E$ be a graph and $P$ be an ideal of $L$ with $P \cap E^0 =H$. 
Then $P$ is a prime ideal if and only if $P$ satisfies one of the following conditions: \\
(i) $P= I(H, B_H)$ where $E^0 \backslash H$ satisfies MT-3 condition; \\ 
(ii) $P=I(H, S)$ where $S=B_H - \{ u\}$ for some $u \in B_H$ and $E^0 \backslash H=M(u)$; \\
(iii) $P= I(H,B_H) +\langle f(c) \rangle $ where $c$ is a cycle without K in $E$ based at a vertex $u$, 
$E^0 \backslash H=M(u)$ and $f(x)$ is an irreducible polynomial in $K[x,x^{-1}]$. 
\end{theorem}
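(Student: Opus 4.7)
My plan is to characterize primeness of $P$ by analyzing the quotient $L/P$, using the isomorphism $L_K(E)/I(H,S) \cong L_K(E \setminus (H,S))$ recalled above together with the standard characterization that a Leavitt path algebra $L_K(F)$ is prime if and only if its vertex set $F^0$ is downward directed (satisfies MT-3). The three cases will correspond to the different ways this downward-directedness can survive the quotient, with case (iii) accounting for the possibility that $P$ is non-graded.

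\textbf{Sufficiency.} In case (i), $L/P \cong L_K(E \setminus (H, B_H))$ has vertex set exactly $E^0 \setminus H$ (no new vertices $v'$ arise when $S = B_H$), so MT-3 on $E^0 \setminus H$ yields primeness at once. In case (ii), the quotient graph has vertex set $(E^0 \setminus H) \cup \{u'\}$ with $u'$ a sink, and I would verify downward-directedness directly: any $x \in E^0 \setminus H$ reaches $u$ because $E^0 \setminus H = M(u)$; and $u$ itself reaches $u'$ in the quotient by first following an edge $u \to v$ (which exists since $u \in B_H$ emits at least one edge into $E^0 \setminus H$), then a path $v \geq u$ guaranteed by $v \in M(u)$, whose final edge has range $u$ and thus lifts to an edge landing at $u'$. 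Case (iii) is treated in two stages: first pass to $L/I(H, B_H) \cong L_K(F)$ with $F = E \setminus (H, B_H)$, noting that $c$ survives as a cycle without K in $F$ and that $F^0 = E^0 \setminus H = M(u)$; then exploit the standard Morita picture of $L_K(F)/\langle f(c) \rangle$, whose scalar ring is $K[x,x^{-1}]/\langle f(x) \rangle$ --- a field, since $f$ is irreducible --- to conclude primeness.

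\textbf{Necessity.} Conversely, assume $P$ is prime, set $H = P \cap E^0$, and let $S_P = \{v \in B_H : v^H \in P\}$, so that $gr(P) = I(H, S_P)$ is the largest graded ideal contained in $P$. If $P$ is graded, then $P = I(H, S_P)$, and primeness of $L/P$ reduces to MT-3 on $(E \setminus (H, S_P))^0$: when $S_P = B_H$ this is just MT-3 on $E^0 \setminus H$, giving case (i); when $S_P \subsetneq B_H$, the additional sinks $v'$ ($v \in B_H \setminus S_P$) must admit common descendants with one another and with the other vertices, forcing $|B_H \setminus S_P| = 1$ (two distinct sinks have no common descendant) and $E^0 \setminus H = M(u)$ where $\{u\} = B_H \setminus S_P$, giving case (ii). If $P$ is non-graded, I invoke the description of non-graded ideals from \cite{R-2} to write $P = gr(P) + \sum_i \langle f_i(c_i) \rangle$ with the $c_i$ cycles without K and $f_i \in K[x,x^{-1}]$; primeness then forces a single cycle $c$, a single irreducible $f$, $gr(P) = I(H, B_H)$, and $E^0 \setminus H = M(u)$ for $u = s(c)$, giving case (iii).

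\textbf{Main obstacle.} The non-graded case is the most delicate. Reducing the polynomial--cycle data to a single pair $(c, f)$ with $f$ irreducible uses primeness in both directions: two distinct cycles $c_1, c_2$, or a factorization $f = f_1 f_2$ of a reducible polynomial, would yield ideals $A, B$ with $AB \subseteq P$ but $A, B \not\subseteq P$, violating primeness; and forcing $S_P = B_H$ requires arguing that any leftover sink $v'$ in $L/gr(P)$ is unaffected by further quotienting by $\langle f(c) \rangle$ (since $c$ lives away from $v'$), so its persistence in $L/P$ would destroy MT-3. Showing conversely that $L_K(F)/\langle f(c) \rangle$ is prime rests on the explicit description of this quotient as Morita equivalent to a matrix algebra over the field $K[x,x^{-1}]/\langle f(x) \rangle$, combined with the downward-directedness inherited from $F^0 = M(u)$.
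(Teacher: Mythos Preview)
The paper does not supply its own proof of this statement: Theorem~\ref{RangaPrime} is quoted verbatim from \cite[Theorem~3.12]{Ranga1} and is used throughout as a black box, so there is no proof in the present paper to compare your proposal against.

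That said, your outline is broadly the strategy of the original source. Rangaswamy's argument also pivots on the equivalence ``$L_K(F)$ is prime $\Leftrightarrow$ $F^0$ is downward directed,'' passes to the quotient graph $E\setminus(H,S)$ for the graded cases, and in the non-graded case uses the structure theorem for ideals (what you cite from \cite{R-2}) together with the identification of the ideal lattice below a cycle without exits with that of $K[x,x^{-1}]$. Your sufficiency argument for case~(ii) --- showing every vertex of the quotient, including $u$ itself, reaches the new sink $u'$ by looping back through $M(u)$ --- is exactly the kind of check needed. The one place where your sketch is thinnest is the sufficiency of case~(iii): the phrase ``Morita equivalent to a matrix algebra over the field $K[x,x^{-1}]/\langle f(x)\rangle$'' is not literally what happens when $F^0$ is infinite, and the actual argument in \cite{Ranga1} proceeds instead by showing directly that $\langle f(c)\rangle$ is prime in $L_K(F)$ via the corner isomorphism $uL_K(F)u \cong K[x,x^{-1}]$ combined with downward-directedness of $F^0 = M(u)$; you would want to tighten that step if writing out a full proof.
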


Recall that for any ideal $N$ of $L$, $gr(N)$ denotes the largest graded ideal contained in $N$. It is proved in \cite[Lemma 3.6]{Ranga1} that 
$gr(N)$ is the ideal generated by the admissible pair $(H,S)$ where $H = N \cap E^0$, and $S = \{ v \in B_H \mid v^H \in N \}$.
Note that if a maximal element $N$ in $\mathcal{L}$ is a graded ideal, then 
clearly $N=gr(N)$ is also a maximal element in $\mathcal{L}_{gr}$. 
We start our discussion by showing that if a maximal element $N$ in $\mathcal{L}$ is non-graded, 
then $gr(N)$ is a maximal element in $\mathcal{L}_{gr}$.

\begin{theorem}\label{gr(M) maximal}If $M$ is a maximal ideal of $L$ which is non-graded,
then $gr(M)$ is maximal among the graded ideals of $L$.   
\end{theorem}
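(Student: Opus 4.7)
The plan is to use the prime/ideal structure theory already recalled in the excerpt. Since $M$ is maximal, Corollary \ref{LPAmaxprime} tells us $M$ is prime. Because $M$ is non-graded, Theorem \ref{RangaPrime} forces $M$ to be of type (iii): there exist a hereditary saturated $H$, a cycle without K $c$ based at a vertex $u$ with $E^{0}\setminus H=M(u)$, and an irreducible $f(x)\in K[x,x^{-1}]$, such that
\[
M \;=\; I(H,B_H)\;+\;\langle f(c)\rangle.
\]
First I would identify $gr(M)$ explicitly. Using the formula $gr(N)=I(N\cap E^{0},\{v\in B_{N\cap E^{0}}:v^{N\cap E^{0}}\in N\})$ from \cite[Lemma 3.6]{Ranga1}, and noting that $I(H,B_H)\subseteq M$ already guarantees $v^{H}\in M$ for every $v\in B_{H}$, I obtain $gr(M)=I(H,B_H)$.

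Next I would assume toward a contradiction that $I(H,B_H)$ is not maximal in $\mathcal{L}_{gr}$: pick a graded ideal $J=I(H',S')$ with $I(H,B_H)\subsetneq J\subsetneq L$. The partial-order description of graded ideals gives $H\subseteq H'$ and $B_H\subseteq H'\cup S'$. I would rule out the case $H'=H$ first: then $S'\subseteq B_{H'}=B_H$ and $B_H\setminus H\subseteq S'$, but $B_H\cap H=\emptyset$, so $S'=B_H$ and $J=I(H,B_H)$, contradicting strict containment. Therefore $H\subsetneq H'$.

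Now comes the main step, which is the only one with any real content. Pick $w\in H'\setminus H$. Since $w\notin H$, we have $w\in E^{0}\setminus H=M(u)$, so $w\geq u$; hereditariness of $H'$ then forces $u\in H'$. Since every vertex on $c$ is reachable from $u$ along $c$, all vertices of the cycle lie in $H'$, so $c\in I(H')\subseteq J$, and consequently $f(c)\in J$. Combined with $I(H,B_H)\subseteq J$, this yields $M\subseteq J$. By the maximality of $M$ and the assumption $J\neq L$, we conclude $J=M$, contradicting the fact that $J$ is graded while $M$ is not.

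The main obstacle (and the only nontrivial geometric point) is the passage from "there is some new vertex in $H'$" to "$u\in H'$"; this is where the specific structure $E^{0}\setminus H=M(u)$ provided by Theorem \ref{RangaPrime}(iii) is essential. Everything else is lattice bookkeeping and a direct appeal to the characterization of $gr(M)$.
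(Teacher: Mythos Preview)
Your proof is correct and follows essentially the same route as the paper's: identify $M$ via Theorem~\ref{RangaPrime}(iii), note $gr(M)=I(H,B_H)$, and show that any strictly larger graded ideal must contain $u$ (hence $f(c)$) and therefore all of $M$, forcing it to be $L$. You supply a bit more detail than the paper does---explicitly verifying $gr(M)=I(H,B_H)$ and explicitly ruling out the case $H'=H$---but the argument is the same.
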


\begin{proof}
Now, $M$ is a prime ideal and so we can write $M=I(H,B_{H})+\langle p(c)\rangle$ where
$H=M\cap E^{0},$ $c$ is a cycle without \textbf{K} in $E$ bases at a vertex
$u$, $p(x)\in K[x,x^{-1}]$ is an irreducible polynomial and $E^{0}\setminus
H=M(u)$ is therefore downward directed. If $N=I(H^{\prime},S^{\prime})$ is a
graded ideal such that $N\supsetneq I(H,B_{H})=gr(M),$ then necessarily
$H^{\prime}\supsetneq H$ and so there is a vertex $v\in H^{\prime}\setminus
H.$ Since $v\geq u,$ $c$ and hence $p(c)\in N.$ This means $M\subsetneqq N$
(note $M\neq N$ as $M$ is non-graded) and, by maximality of $M,N=L.$ Thus
$gr(M)$ is maximal among the graded ideals of $L$.
\end{proof}


\section{Existence of Maximal ideals}

Recall that in a unital ring, it is always true that any ideal is contained in a maximal ideal. Hence, maximal ideals always exist in unital rings. However, this is not necessarily true in a non-unital ring. 
In this section we are going to study when maximal ideals exist and also the conditions on the graph $E$ for which every ideal of $L$ is contained in a maximal ideal. 
We first state an example of a Leavitt path algebra having no maximal ideals. This example is given in \cite[Example p.87]{Ranga1}, \cite[Example 2.8]{EKR}.

\begin{example} \label{ikikulak}
\rm
Let $E$ be a graph with $E^{0}=\{v_{i} :i=1,2,\ldots \}$. For each $i$, there is an edge $e_{i}$ with
$r(e_{i})=v_{i}$ and $s(e_{i})=v_{i+1}$ and at each $v_{i}$ there are two
loops $f_{i},g_{i}$ so that $v_{i}=s(f_{i})=r(f_{i})=s(g_{i})=r(g_{i})$. Thus
$E$ is the graph
\[ 
\xymatrix{ \ar@{.>}[r] &
\bullet_{v_3}\ar@(u,l)_{f_3} \ar@(u,r)^{g_3} \ar@/_.3pc/[rr]_{e_2} & &  
\bullet_{v_2}\ar@(u,l)_{f_2} \ar@(u,r)^{g_2} \ar@/_.3pc/[rr]_{e_1}  && \bullet_{v_1}\ar@(u,l)_{f_1} \ar@(u,r)^{g_1} }
\]
Clearly $E$ is a row-finite graph and the non-empty proper hereditary
saturated subsets of vertices in $E$ are the sets $H_{n}=\{v_{1},\ldots,v_{n}\}$ 
for some $n\geq1$ and form an infinite chain under set inclusion. 
Graph $E$ satisfies condition (K), so all ideals are graded, generated by $H_n$ for some $n$ and 
they form a chain under set inclusion. 
As the chain of ideals does not terminate, $L_{K}(E)$ does not contain any maximal ideals.
Note also that, $E^{0}\backslash(H_{n},\emptyset)$ is downward directed for each $n$, 
thus all ideals of $L$ are prime ideals.
\end{example}

The necessary and sufficient conditions for the existence of a maximal ideal in a Leavitt path algebra is given in the following lemmata. 
Moreover, the description of the maximal ideals in both graded and non-graded cases are stated.

\begin{lemma}\label{I(H,B) maximal} Let $E$ be any graph and $H \in \mathcal{H}_{E}$. Then $I(H,B_{H})$
is a maximal ideal in $L_{K}(E)$ if and only if $H$ is a maximal element in $\mathcal{H}_{E}$ and the
quotient graph $E\backslash (H,B_{H})$ has Condition $(L)$. 
\end{lemma}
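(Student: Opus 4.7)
The natural strategy is to exploit the quoted isomorphism $L/I(H,B_{H}) \cong L_{K}(F)$, where $F := E \setminus (H,B_{H})$, and translate maximality of $I(H,B_{H})$ in $L$ into simplicity of $L_{K}(F)$. First I would observe that because $S = B_{H}$, we have $B_{H} \setminus S = \emptyset$, so $F^{0} = E^{0} \setminus H$ with no primed vertices, and every infinite emitter of $F$ inherits all of its $E$-edges with range outside $H$, hence remains an infinite emitter. In particular, the breaking vertex set $B^{F}_{\emptyset}$ is empty, so the only admissible pairs of $F$ are $(\emptyset,\emptyset)$, $(F^{0},\emptyset)$, and those built from proper nonempty hereditary saturated subsets of $F^{0}$; consequently the lemma reduces to the criterion that $L_{K}(F)$ is simple iff $\mathcal{H}_{F}=\{\emptyset,F^{0}\}$ and $F$ satisfies Condition (L).

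For the reverse direction, assume $H$ is maximal in $\mathcal{H}_{E}$ and $F$ satisfies Condition (L). Given any ideal $N \supsetneq I(H,B_{H})$, its image $\bar N$ in $L_{K}(F)$ is a nonzero ideal, and Condition (L) forces $\bar N$ to contain some vertex $v \in F^{0} = E^{0} \setminus H$. The key technical point is that whenever $\tilde H \in \mathcal{H}_{F}$, the set $H \cup \tilde H$ lies in $\mathcal{H}_{E}$: heredity is immediate, and for saturation, a regular vertex $w \in E^{0}$ with $r(s^{-1}(w)) \subseteq H \cup \tilde H$ either has all of its edges ranging in $H$ (so $w \in H$ by saturation of $H$) or has some edges ranging in $\tilde H$, in which case $w$ is regular in $F$ with $r_F(s_F^{-1}(w)) \subseteq \tilde H$, forcing $w \in \tilde H$ by saturation of $\tilde H$ in $F$. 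Applying this to the hereditary saturated closure of $\{v\}$ in $F$ produces a hereditary saturated set strictly containing $H$, which must be $E^{0}$ by maximality. Hence $N$ contains every vertex of $E$, and since $L$ has local units consisting of sums of vertices, $N = L$.

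For the forward direction, assume $I(H,B_{H})$ is maximal, so $L_{K}(F)$ is simple. If $F$ failed Condition (L), then $F$ would contain a cycle without exit at some $u \in F^{0}$, and Theorem \ref{RangaPrime}(iii) would supply a nontrivial non-graded prime ideal $\langle p(c) \rangle$ in $L_{K}(F)$, contradicting simplicity. If $H$ were not maximal in $\mathcal{H}_{E}$, then some $H' \in \mathcal{H}_{E}$ satisfies $H \subsetneq H' \subsetneq E^{0}$; one checks (by the same kind of saturation argument in reverse) that $H' \setminus H$ is a proper nonempty element of $\mathcal{H}_{F}$, giving a nontrivial graded ideal $I(H' \setminus H, \emptyset)$ of $L_{K}(F)$, again a contradiction. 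The main obstacle I expect is the saturation bookkeeping in the two-way correspondence between hereditary saturated sets of $E^{0}$ containing $H$ and hereditary saturated sets of $F^{0}$; the setup is manageable here precisely because $S = B_{H}$ annihilates all the complications coming from breaking vertices of the quotient graph.
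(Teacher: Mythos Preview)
Your forward direction contains a genuine gap, and it is the same gap present in the paper's own argument. You assert that if $H\subsetneq H'\subsetneq E^{0}$ in $\mathcal H_{E}$ then $H'\setminus H$ is saturated in $F=E\setminus(H,B_{H})$; but this can fail at breaking vertices. Any $u\in B_{H}$ becomes a \emph{regular} vertex of $F$, and if every $E$-edge out of $u$ lands in $H'$ while $u\notin H'$ (which the saturation of $H'$ in $E$ cannot exclude, since $u$ is an infinite emitter there), then $u$ witnesses the failure of saturation of $H'\setminus H$ in $F$. The paper's version of this step is the claim that $I(H',B_{H'})/I(H,B_{H})$ is a proper ideal of $L/I(H,B_{H})$, which tacitly assumes $I(H,B_{H})\subseteq I(H',B_{H'})$; in the situation just described one has $u^{H}\notin I(H',B_{H'})$, so that inclusion fails as well. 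In fact the forward implication of the lemma is false as written: take $E^{0}=\{u,v,w\}$ with $w$ a sink, two loops at $v$, one edge $u\to v$, and infinitely many edges $u\to w$. Then $H=\{w\}$ has $B_{H}=\{u\}$, the quotient $F$ is the graph with one edge $u\to v$ and two loops at $v$, and $L_{K}(F)$ is simple, so $I(H,B_{H})$ is maximal; yet $\{v,w\}\in\mathcal H_{E}$ properly contains $H$. The statement becomes correct if one replaces ``$H$ maximal in $\mathcal H_{E}$'' by ``$(H,B_{H})$ maximal among admissible pairs''; for row-finite $E$ (where every $B_{H}$ is empty) the two conditions coincide and no issue arises.

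Your reverse direction, by contrast, is correct and takes a somewhat different route from the paper. The paper argues via $gr(N)$: it uses the lattice isomorphism for admissible pairs to force $gr(N)=I(H,B_{H})$, and then the generator description from \cite{R-2} together with Condition~(L) on $F$ to conclude that $N$ has no non-graded part. You instead pass directly to $L_{K}(F)$, invoke Condition~(L) to extract a vertex from the nonzero image $\bar N$, and then run the correspondence $\tilde H\mapsto H\cup\tilde H$; your saturation check for this map is sound because it only ever tests vertices that are regular in $E$. Your approach avoids the structure theorem for non-graded ideals, while the paper's avoids having to verify the hereditary--saturated correspondence by hand.
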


\begin{proof}
Assume $I(H,B_{H})$ is a maximal ideal in $L_{K}(E)$. Notice $L/I(H,B_{H}) \cong L(F)$ is simple where $F=E\backslash (H,B_{H})$. 
By \cite[Theorem 3.1]{AA2}, \cite[Theorem 6.18]{T} the
graph $F$ has Condition (L) and the only hereditary saturated subsets of $F$
are the trivial ones. Suppose $H$ is not a maximal element
in $\mathcal{H}_{E}.$ So there exists $H^{\prime }$ in $\mathcal{H}_{E}$ 
containing $H$. Then $I(H^{\prime },B_{H^{\prime }})/I(H,B_{H})$ is
a proper ideal of $L/I(H,B_{H})$ which contradicts the simplicity.

Conversely, take a maximal element $H\in \mathcal{H}_{E}$ such that  $E\backslash
(H,B_{H})$ has Condition (L). Suppose that there exists a proper ideal $N$
containing $I(H,B_{H})$. Consider the graded part of $N$, $gr(N)=I(H_{1},S_{1})$ 
with $H_{1}\in \mathcal{H}_{E}$ and $S_{1}\subseteq B_{H_{1}}$. Hence
\[ I(H,B_{H})\subseteq gr(N)=I(H_{1},S_{1})\subseteq N.\]
By \cite[Proposition 2.5.4]{AAS}, $(H,B_{H})\leq^{\prime} (H_{1},S_{1})$. Therefore,  
$H\subset H_{1}$ and $B_{H}\subseteq S_{1}\cup H_{1}\subseteq B_{H_{1}}\cup H_{1}.
$ Since $H$ is maximal in $\mathcal{H}_{E},$ it follows that $H=H_{1}$ and
so $B_{H}\subseteq S_{1}\cup H\subseteq B_{H}\cup H$ implies $S_{1}=B_{H}.$
Hence $I(H,B_{H})=gr(N).$ On the other hand by \cite[Theorem 4]{R-2}, $N$ is
of the form 
\[
H\cup \{v^{H}~|~v\in S\subseteq B_{H}\}\cup Y
\]
where $Y$ is a set of mutually orthogonal elements of the form $%
(u+\sum_{i=1}^{n}k_{i}g^{r_{i}})$ in which $g$ is a (unique) cycle with no
exits in $E^{0}\backslash H$ based at a vertex $u$ in $E^{0}\backslash H$
and $k_{i}\in K$ with at least one $k_{i}\neq 0.$ As $E\backslash (H,B_{H})$
has Condition (L), $Y=\emptyset $ and $I(H,B_{H})=N.$ Hence $I(H,B_{H})$ is
a maximal ideal in $L_{K}(E).$
\end{proof}
\medskip

For the existence of a maximal non-graded ideal, $\mathcal{H}_{E}$ needs to have a maximal element whose quotient graph 
does not satify condition (L).  

\begin{lemma}  \label{existNon-graded} For any graph $E$, $H$ is a maximal element in $\mathcal{H}_{E}$ with $E\backslash
(H,B_{H})$ not satisfying Condition (L), if and only if there is a maximal non-graded ideal $M$ containing $I(H,B_H)$ with $H=M \cap E^0$.
\end{lemma}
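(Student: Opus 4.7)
The plan is to prove each direction by invoking Corollary \ref{LPAmaxprime} (putting us in the prime-ideal setting), the classification in Theorem \ref{RangaPrime}, and the lattice correspondence between graded ideals of $L$ and admissible pairs of $E$.

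For the ($\Rightarrow$) direction, set $F := E\setminus(H,B_H)$, so that $L/I(H,B_H)\cong L_K(F)$. Maximality of $H$ in $\mathcal{H}_E$ translates, via the admissible-pair correspondence, into $F$ having only trivial hereditary saturated subsets. The failure of Condition~(L) then produces a cycle $c$ in $F$ with no exit, based at some vertex $u$. I would first verify that (i) $c$ is a cycle without K in the original graph $E$, which follows because any distinct cycle through a vertex of $c$ in $E$ must lie entirely in $E^0\setminus H$ (by hereditariness of $H$) and so would yield an exit for $c$ in $F$; and (ii) $E^0\setminus H = M(u)$, which follows because $\{w\in E^0\setminus H: w\not\ge u\}$ is easily seen to be a hereditary saturated subset of $F^0$ not containing $u$, hence empty. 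Then for any irreducible $f(x)\in K[x,x^{-1}]$, set $M := I(H,B_H)+\langle f(c)\rangle$; by Theorem \ref{RangaPrime}(iii), $M$ is a prime, non-graded ideal containing $I(H,B_H)$ with $M\cap E^0=H$. The remaining step is that $M$ is actually maximal, which I would derive from the known fact that $L_K(F)$ (a Leavitt path algebra with only trivial HS subsets and a cycle without exit) is Morita equivalent to $K[x,x^{-1}]$; under this equivalence the image of $M/I(H,B_H)$ corresponds to the maximal ideal $\langle f(x)\rangle$ of $K[x,x^{-1}]$.

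For the ($\Leftarrow$) direction, let $M$ be a maximal non-graded ideal with $I(H,B_H)\subseteq M$ and $M\cap E^0 = H$. By Corollary \ref{LPAmaxprime}, $M$ is prime, and the non-graded hypothesis forces $M$ into case (iii) of Theorem \ref{RangaPrime}: $M = I(H,B_H)+\langle f(c)\rangle$, where $c$ is a cycle without K in $E$ based at $u$, $E^0\setminus H = M(u)$, and $f$ is irreducible. To see $F$ fails Condition~(L), I argue the contrapositive: any exit $e$ for $c$ in $F$ has $r(e)\in E^0\setminus H = M(u)$, so $r(e)$ admits a path to $u$, which can be continued around $c$ back to $s(e)$; this yields a closed path at $s(e)$ whose first edge is $e\ne c_i$, and extracting a simple closed subpath at $s(e)$ produces a cycle through $s(e)\in c^0$ distinct from $c$, contradicting ``without K''. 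For the maximality of $H$, suppose $H'\in\mathcal{H}_E$ properly contains $H$; any $v\in H'\setminus H$ satisfies $v\ge u$, forcing $u\in H'$ and hence $c^0\subseteq H'$, so $f(c)\in I(H',B_{H'})$ and $M\subseteq I(H',B_{H'})$. Since $M$ is non-graded and $I(H',B_{H'})$ is graded, this inclusion is strict; the maximality of $M$ then forces $I(H',B_{H'})=L$, i.e.\ $H'=E^0$, so $H$ is maximal among the proper elements of $\mathcal{H}_E$.

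The main obstacle I expect is the maximality assertion in the forward direction: Theorem \ref{RangaPrime}(iii) supplies only primeness of $M$, so lifting to maximality requires invoking the Morita equivalence $L_K(F)\sim K[x,x^{-1}]$ (valid because $F$ has only trivial HS subsets and a cycle without exit) and pulling the maximal ideal $\langle f(x)\rangle$ of $K[x,x^{-1}]$ back to $L$. The graph-theoretic bookkeeping — reconciling ``no exit in $F$'' with ``without K in $E$'' and confirming that every vertex of $E^0\setminus H$ reaches $u$ — is routine but must be done carefully because the breaking vertices of $H$ vanish in the quotient yet still contribute to the ideal structure of $L$ via $I(H,B_H)$.
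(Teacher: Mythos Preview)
Your argument is correct, but the route differs from the paper's in both directions.

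For $(\Rightarrow)$, the paper does not jump straight to an irreducible polynomial. It first builds \emph{some} non-graded ideal $N\supsetneq I(H,B_H)$, argues that $N$ cannot sit inside any graded maximal ideal (using Lemma~\ref{I(H,B) maximal} and the maximality of $H$), and then runs a chain argument: any strictly ascending chain $N=N_0\subsetneq N_1\subsetneq\cdots$ has $gr(N_i)=I(H,B_H)$ for all $i$, so each $N_i$ is generated by $I(H,B_H)$ together with $f_i(g)$ for some $f_i\in K[x]$ with $f_{i+1}\mid f_i$; finiteness of the divisor set forces termination at an irreducible $f$. Your approach is more structural: you immediately pick $f$ irreducible and read off maximality from the identification of the ideal lattice of $L_K(F)$ with that of $K[x,x^{-1}]$. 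This is cleaner, but note that ``Morita equivalent to $K[x,x^{-1}]$'' is a stronger statement than you need and is not entirely immediate for arbitrary $F$; what you actually use is just the ideal-lattice correspondence, which follows from the generator description of ideals in \cite{R-2} (the paper itself invokes this, via \cite[Lemma~3.5]{Ranga1}, in the proof of Theorem~\ref{UniqueMaxisGraded}).

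For $(\Leftarrow)$, the paper leans on Theorem~\ref{gr(M) maximal} to conclude that $I(H,B_H)$ is maximal in $\mathcal{L}_{gr}$, and hence $H$ is maximal in $\mathcal{H}_E$; Condition~(L) is then ruled out by appeal to Lemma~\ref{I(H,B) maximal}. You instead give the direct containment argument $M\subsetneq I(H',B_{H'})$ and an explicit graph-theoretic verification that $c$ has no exit in $F$. Both work; yours avoids the dependence on Theorem~\ref{gr(M) maximal}, while the paper's version keeps the graph bookkeeping to a minimum.
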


\begin{proof}
Take a maximal element $H$ in $\mathcal{H}_{E}$ with $E\backslash
(H,B_{H})$ containing a cycle $g$ with no exit and based at a vertex $u$ in $E^{0}\backslash H$. 
Then construct the ideal $N$ generated by 
\[
H\cup \{v^{H}~|~v\in B_{H}\}\cup Y
\]
where $Y$ is a set of mutually orthogonal elements of the form 
$(u+\sum_{i=1}^{n}k_{i}g^{r_{i}})$ in which $k_{i}\in K$ with at least one $k_{i}\neq 0.$ 

Now we want to show that $N$ is not contained in a maximal graded ideal $M$. Since otherwise, 
$M$ is of the form $(H',S')$ where $H'$ is
a hereditary saturated subset of $E^{0}$ and $S'$ is a subset of $B_{H}$. Since every maximal ideal in a Leavitt path
algebra is prime, $M$ is a graded prime ideal. By \cite[Theorem 3.12]{Ranga1}, 
$M=I(H',S')$ with either $S'=B_{H'}$ and $E^{0}\backslash H'$ is MT-3 or $S'=B_{H'}-\{u\}
$ with $E^{0}\backslash H'=M(u)$ for some $u\in B_{H'}$. However, notice that
if $S'=B_{H'}-\{u\}$ for some $u\in B_{H'}$ then 
$M$ cannot be a maximal ideal as it is contained in the proper ideal
generated by $(H',B_{H'})$. So $M=I(H',B_{H'})$. 
Now, by Lemma \ref{I(H,B) maximal}, $H'$ is a maximal element in $\mathcal{H}_{E}$ and 
$E\backslash (H',B_{H'})$ has Condition $(L)$. This contradicts the fact that $H$ is a maximal element and
 $I(H,B_H)=gr(N) \subset N \subsetneq I(H',B_{H'})=M$.

If $N$ is maximal, then we are done. 
If not, then there exists a proper ideal $N_1$ containing $N_0:=N$ properly. 
If $N_1$ is a maximal ideal, then it is a non-graded ideal and we are done. Otherwise, 
continuing in this manner, either there is a maximal non-graded ideal $N_i$ containing $N_{i-1}$ or else there is an infinite chain of proper ideals 
$\{N_i\}_{i \in \mathbb{N} }$, with $N_0=N$, $N_i \subset N_{i+1}$. The graded part of $N_i$, satisfies 
$gr(N_i) \subseteq N_i \subsetneq N_{i+1}$, hence $gr(N_i) \subseteq gr(N_{i+1})$ for all $i$. 
By the maximality of $H$, we conclude that $I(H,B_H)= gr(N)= gr(N_i)$ for all $i$, and $N_i$ is generated by 
$I(H,B_H) \cup \langle f_i(g) \rangle $ for some polynomial $f_i(x) \in K[x]$. 
This yields to a sequence of polynomials $f_i(x)$, with $f_0=f$ and $f_{i+1} | f_i$. 
As there are only finitely many factors of $(1+\sum_{i=1}^{n}k_{i}x^{r_{i}})$, the sequence stabilizes at an 
irreducible polynomial $f$ that divides $(1+\sum_{i=1}^{n}k_{i}x^{r_{i}})$. Hence, $I(H,B_H) \cup \langle f(g) \rangle $ is 
a maximal non-graded ideal. 

Conversely, assume $M$ is a maximal non-graded ideal. By \cite[Theorem 3.12]{Ranga1}, 
$M=I(H,B_{H})+\langle p(c)\rangle$ where $c$ is a cycle without \textbf{K} in $E$ based at
a vertex $u$, $E^{0}\backslash H=M(u)$ and $p(x)$ is an irreducible polynomial in $K[x,x^{-1}]$.
By Theorem \ref{gr(M) maximal}, $I(H,B_{H})$ is a maximal element in $\mathcal{L}_{gr}$. If $H \subseteq H'$, then $I(H',B_{H'})$ contains $I(H,B_H)$, by maximality in $\mathcal{L}_{gr}$, $I(H',B_{H'}) = I(H,B_H)$. Hence, 
$H$ is a maximal element in $\mathcal{H}_{E}$. Now, clearly $E\backslash(H,B_{H})$ cannot satisfy Condition (L), otherwise by Lemma 
\ref{I(H,B) maximal}, $I(H,B_H)$ will be a maximal ideal of $L_K(E)$.
\end{proof}

\bigskip

From Lemma \ref{existNon-graded} and Lemma \ref{I(H,B) maximal}, we deduce that there is a maximal element in $\mathcal{H}_{E}$ if and only if there exists a maximal ideal in $L$. 
We record this result as a theorem below.
Note that the graph in Example \ref{ikikulak} did not have any maximal elements in $\mathcal{H}_E$, hence $L$ did not have any maximal ideals. 
\begin{theorem}\label{existence}(Existence Theorem)
If $L_{K}(E)$ has a maximal ideal  if and only if $\mathcal{H}_E$ has a maximal element.
\end{theorem}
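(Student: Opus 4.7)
The plan is to derive this theorem directly from the two preceding lemmas, which between them already carry the substantive content; the Existence Theorem should be a clean packaging of Lemma \ref{I(H,B) maximal} and Lemma \ref{existNon-graded}.

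For the ($\Leftarrow$) direction, I would begin with a maximal element $H \in \mathcal{H}_E$ and split on whether the quotient graph $E \setminus (H, B_H)$ satisfies Condition (L). If it does, Lemma \ref{I(H,B) maximal} hands us $I(H, B_H)$ as a maximal ideal directly. If it does not, then the forward half of Lemma \ref{existNon-graded} produces a maximal non-graded ideal $M$ containing $I(H, B_H)$. Either way a maximal ideal of $L$ is exhibited.

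For the ($\Rightarrow$) direction, I would take a maximal ideal $M$ of $L$ and split on whether $M$ is graded. If $M$ is non-graded, the converse half of Lemma \ref{existNon-graded} immediately supplies a maximal element of $\mathcal{H}_E$. If $M$ is graded, then by Corollary \ref{LPAmaxprime} it is prime, and Theorem \ref{RangaPrime} classifies graded prime ideals into two forms: either $I(H, B_H)$ with $E^0 \setminus H$ satisfying MT-3, or $I(H, B_H - \{u\})$ for some $u \in B_H$ with $E^0 \setminus H = M(u)$. The crucial sub-step is to eliminate the second form: by \cite[Proposition 2.5.4]{AAS} the admissible pair $(H, B_H - \{u\})$ is strictly below $(H, B_H)$, so $I(H, B_H - \{u\}) \subsetneq I(H, B_H)$, and $I(H, B_H)$ remains a proper ideal of $L$, contradicting maximality of $M$. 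Hence $M = I(H, B_H)$, and Lemma \ref{I(H,B) maximal} then forces $H$ to be maximal in $\mathcal{H}_E$.

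The only delicate point is this elimination of the $I(H, B_H - \{u\})$ case in the graded subcase of the forward direction — one must invoke the lattice correspondence between admissible pairs and graded ideals to guarantee both the strict containment and the properness of the overideal $I(H, B_H)$. Everything else is a direct appeal to the lemmas and corollaries already in place, which is why I expect no substantive obstacle beyond careful bookkeeping.
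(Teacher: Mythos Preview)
Your proposal is correct and follows exactly the route the paper intends: the paper's proof is the single sentence ``By Lemma \ref{existNon-graded} and Lemma \ref{I(H,B) maximal}, the result follows,'' and your case splits are precisely how one unpacks that sentence. The one detail you spell out that the paper leaves implicit---eliminating the possibility $M = I(H, B_H \setminus \{u\})$ in the graded case---is itself already observed inside the proof of Lemma \ref{existNon-graded}, so no new idea is needed.
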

\begin{proof}
By Lemma \ref{existNon-graded} and Lemma \ref{I(H,B) maximal}, the result follows.
\end{proof}

\bigskip

Now, we prove the main result that states the condition when every ideal of a Leavitt path algebra is contained in a maximal ideal.

\begin{theorem}\label{IdealinMaxIdeal}
Assume for every element $X\in \mathcal{H}_{E}$, there exists a maximal
element $Z \in \mathcal{H}_{E}$ such that $X\subseteq Z$, if and only if every ideal in $L_K(E)$ is contained in a maximal ideal.
\end{theorem}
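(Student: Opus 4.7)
The plan is to prove the two implications of the iff separately. The easier reverse implication goes as follows: given $X \in \mathcal{H}_E$, the ideal $I(X, B_X)$ is proper in $L$, so by assumption it is contained in some maximal ideal $M$; setting $Z := M \cap E^0$ we have $X \subseteq Z$, and it remains to verify $Z$ is maximal in $\mathcal{H}_E$. When $M$ is graded, Lemma \ref{I(H,B) maximal} forces $M = I(Z, B_Z)$ and delivers the maximality of $Z$; when $M$ is non-graded, Theorem \ref{gr(M) maximal} identifies $gr(M)$ with $I(Z, B_Z)$ as a maximal element of $\mathcal{L}_{gr}$, from which the same reasoning yields $Z$ maximal in $\mathcal{H}_E$.

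For the forward implication, let $N$ be a proper ideal of $L$ and put $H := N \cap E^0$. The hypothesis provides a maximal $Z \in \mathcal{H}_E$ with $H \subseteq Z$, and I aim to produce a maximal ideal $M \supseteq N$ by working in the quotient $L/I(Z, B_Z) \cong L_K(F)$ with $F := E \setminus (Z, B_Z)$. The maximality of $Z$ forces $F$ to have only the trivial hereditary saturated subsets $\emptyset$ and $F^0$; thus Lemmata \ref{I(H,B) maximal} and \ref{existNon-graded} describe the maximal ideals of $L$ extending $I(Z, B_Z)$ as either $I(Z, B_Z)$ itself (when $F$ satisfies Condition (L), so that $L_K(F)$ is simple) or ideals $I(Z, B_Z) + \langle p(c) \rangle$ for an irreducible $p \in K[x, x^{-1}]$ and a cycle $c$ without exit in $F$ (when (L) fails). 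Letting $\bar N$ be the image of $N$ in $L_K(F)$, proving $\bar N$ is proper in $L_K(F)$ would finish the argument: in the (L)-case $\bar N = 0$, so $N \subseteq I(Z, B_Z)$ is already maximal; in the non-(L)-case $\bar N = \langle q(c)\rangle$ for some polynomial $q$, so taking $p$ to be any irreducible factor of $q$ gives $N \subseteq I(Z, B_Z) + \langle p(c) \rangle$, a maximal ideal.

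The crux is thus to ensure $\bar N$ is proper in $L_K(F)$, equivalently that $N + I(Z, B_Z) \neq L$. The obstruction is that a breaking vertex $v \in S_N := \{w \in B_H : w^H \in N\}$ may have all of its finitely many edges whose range lies outside $H$ actually ranging into $Z \setminus H$; then $v^H$ maps to the nonzero vertex $v$ of $F^0$ in $L_K(F)$, and the hereditary saturated closure of $\{v\}$ in $F$ forces $\bar N = L_K(F)$. To circumvent this I plan to refine the choice of $Z$: first enlarge $H$ to the hereditary saturated closure of $H$ together with the finitely many external range-vertices of each $v \in S_N$ (and the analogous range-data extracted from the polynomial generators of $N$ provided by \cite{R-2}), then invoke the hypothesis on this enlarged proper hereditary saturated set to obtain the final maximal $Z$. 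Verifying that the enlargement stays inside a proper hereditary saturated subset, so that the hypothesis applies, is the technically delicate step and the main obstacle of the proof.
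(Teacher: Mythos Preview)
Your reverse implication mirrors the paper's argument (pass from $X$ to $I(X,B_X)$, then to a maximal ideal $M$, then extract $Z=M\cap E^0$ and invoke Lemma~\ref{I(H,B) maximal} / Theorem~\ref{gr(M) maximal}).  For the forward implication the paper is much terser than you: it simply asserts that the ideal $N$ is contained in $I(H',B_{H'})$ once a maximal $H'\supseteq H=N\cap E^0$ is chosen, and then invokes Lemmata~\ref{I(H,B) maximal} and~\ref{existNon-graded}.  You are right to be suspicious of that containment, and the obstruction you isolate is genuine rather than merely ``technically delicate''.

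Here is a concrete instance.  Let $E$ have vertices $u,v,w$, two loops at $u$, one edge $e\colon v\to u$, and infinitely many edges $v\to w$.  Then $\mathcal{H}_E=\{\emptyset,\{u\},\{w\},\{u,w\}\}$ with unique maximal element $Z=\{u,w\}$, and $B_Z=\emptyset$ (every edge out of $v$ lands in $Z$).  Take $N=I(\{w\},B_{\{w\}})=I(\{w\},\{v\})$.  Then $v^{\{w\}}=v-ee^\ast\in N$, while $ee^\ast=eue^\ast\in I(Z,\emptyset)$; hence $v\in N+I(Z,B_Z)$ and $N+I(Z,B_Z)=L$.  So the image $\bar N$ in $L_K(F)$ is \emph{not} proper.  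Your proposed fix---enlarge $H=\{w\}$ by adjoining the external range vertex $u$ of the breaking vertex $v$---produces exactly $\{u,w\}=Z$, after which the same computation shows $\bar N=L_K(F)$.  No further enlargement is possible, so the strategy stalls.  (In this example $N$ happens to be maximal already, with simple quotient $L_K(E\setminus(\{w\},\{v\}))$; the point is that neither your argument nor the paper's detects this.)

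The same example also undermines the reverse implication as you (and the paper) argue it: $N=I(\{w\},B_{\{w\}})$ is a graded maximal ideal whose vertex set $\{w\}$ is \emph{not} maximal in $\mathcal{H}_E$, contradicting the forward direction of Lemma~\ref{I(H,B) maximal} on which you rely.  So the route through ``$M$ graded maximal $\Rightarrow M=I(Z,B_Z)$ with $Z$ maximal in $\mathcal{H}_E$'' needs an independent justification; the quoted lemma does not supply it in the presence of breaking vertices.
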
 
\begin{proof}
Let $N$ be an ideal, then $N$ is generated by $I(H,B_H) \cup Y$ where $Y$ is a set of mutually orthogonal elements of the form 
$(u+\sum_{i=1}^{n}k_{i}g^{r_{i}})$ in which a cycle $g$ with no
exits in $E^{0}\backslash H$ based at a vertex $u$ in $E^{0}\backslash H$
and $k_{i}\in K$ with at least one $k_{i}\neq 0$. By hypothesis, there is a maximal element $H'$ containing $H$. 

If $E\backslash (H',B_{H'})$ satisfies Condition (L), by Lemma \ref{I(H,B) maximal}, $I(H',B_H')$ is a graded maximal ideal containing $N$. 
If $E\backslash (H',B_{H'})$ does not satisfies Condition (L), by Lemma \ref{existNon-graded}, there is a non-graded maximal ideal 
containing $I(H',B_H')$ containing $N$.

Conversely, take any $X\in \mathcal{H}_{E}$, and form the ideal $I(X,B_X)$. By assumption, $I(X,B_X)$ is contained in a maximal ideal $M$. Moreover, $I(X,B_X) \subseteq gr(M)$.
Then by Lemma \ref{I(H,B) maximal} and Lemma \ref{existNon-graded}, $gr(M)=I(Z,B_Z)$ for some maximal element $Z$ of $\mathcal{H}_{E}$. 
Hence, $X \subset Z$ and the desired result is achieved.
\end{proof}

\medskip

Now, we prove that a unique maximal element in $\mathcal{L}$, has to be a graded ideal. Hence, the unique maximal element in
$\mathcal{L}_{gr}$.  

\begin{theorem}\label{UniqueMaxisGraded}
If $L$ has a unique maximal ideal $M$, then $M$ must be a graded ideal.
\end{theorem}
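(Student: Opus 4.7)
The plan is to argue by contradiction: suppose $M$ is non-graded and produce a second maximal ideal distinct from $M$. By Corollary \ref{LPAmaxprime} and Theorem \ref{RangaPrime}(iii) we may write $M = I(H,B_H) + \langle p(c)\rangle$ for a cycle $c$ without \textbf{K} based at some vertex $u$ with $E^{0}\setminus H = M(u)$, and $p(x)\in K[x,x^{-1}]$ irreducible. Theorem \ref{gr(M) maximal} then gives that $gr(M)=I(H,B_H)$ is maximal among graded proper ideals, and Lemma \ref{existNon-graded} gives that $H$ is a maximal element of $\mathcal{H}_E$.

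Since $K[x,x^{-1}]$ admits infinitely many pairwise non-associate irreducible polynomials, the idea is to pick $q(x)$ irreducible and not associate to $p(x)$ and form $N := I(H,B_H)+\langle q(c)\rangle$. Theorem \ref{RangaPrime}(iii) makes $N$ a proper prime ideal. To separate $N$ from $M$ I would pass to $L_K(F)=L/I(H,B_H)$ with $F=E\setminus(H,B_H)$ and observe that $\bar c$ is a cycle \emph{without exit} in $F$: any exit $e$ of $c$ in $E$ must have $r(e)\in H$, since otherwise $r(e)\in E^{0}\setminus H=M(u)$ would yield a path from $r(e)$ back to a vertex of $c$, producing a second cycle at a vertex of $c$ beginning with $e$ and contradicting "without \textbf{K}". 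Hence $\bar u\, L_K(F)\,\bar u\cong K[x,x^{-1}]$ via $\bar c\mapsto x$, and the principal ideals $(p),(q)$ of $K[x,x^{-1}]$ are distinct, so $M\neq N$.

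It remains to show $N$ is maximal. Suppose $N\subsetneq N'\subsetneq L$. Since $gr(N')\supseteq gr(N)=I(H,B_H)$ and $gr(M)=I(H,B_H)$ is maximal graded, $gr(N')=I(H,B_H)$. Using the structural description of ideals from \cite[Theorem 4]{R-2} invoked in the proof of Lemma \ref{existNon-graded}, together with the fact that $\bar c$ is the \emph{unique} cycle without exit in $F$ (since $F^{0}=M(\bar u)$ forces any such cycle to be based at $\bar u$ and hence to coincide with $\bar c$), the ideal $N'\cap\bar u\, L_K(F)\,\bar u$ corresponds to a principal ideal $(f)$ of $K[x,x^{-1}]$ with $(q)\subsetneq(f)$; irreducibility of $q$ forces $f$ to be a unit, whence $\bar u\in N'$, so $N'\cap E^{0}\supsetneq H$, forcing $N'=L$ by maximality of $H$ in $\mathcal{H}_E$. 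Thus $N$ is a maximal ideal distinct from $M$, contradicting uniqueness. The main obstacle is setting up the dictionary between proper ideals $N'\supseteq I(H,B_H)$ with $gr(N')=I(H,B_H)$ and ideals of the PID $K[x,x^{-1}]$ via the cycle-without-exit corner at $\bar u$; once this correspondence is in hand, irreducibility of $q$ delivers both distinctness and maximality.
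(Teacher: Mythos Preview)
Your proposal is correct and follows essentially the same route as the paper: assume $M$ is non-graded, write $M=I(H,B_H)+\langle p(c)\rangle$ via Theorem~\ref{RangaPrime}(iii), use Theorem~\ref{gr(M) maximal} to conclude $I(H,B_H)$ is maximal graded, observe that $c$ is the unique cycle without exit in $E\setminus(H,B_H)$, and then replace $p$ by a different irreducible $q$ to produce a second maximal ideal. The only cosmetic difference is that the paper invokes \cite[Lemma~3.5]{Ranga1} to identify the proper ideals of $L/I(H,B_H)$ directly with the ideals $\langle f(c)\rangle$, $f\in K[x]$, whereas you reconstruct this correspondence by hand via the corner $\bar u\,L_K(F)\,\bar u\cong K[x,x^{-1}]$ together with \cite[Theorem~4]{R-2}; both arguments deliver the same bijection and hence the same contradiction.
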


\begin{proof}
Clearly $M$ is a prime ideal. Assume, by the way of contradiction, that $M$ is
a non-graded ideal with $M\cap E^{0}=H$. Then, by \cite[Theorem 3.12]{Ranga1},
$M=I(H,B_{H})+\langle p(c)\rangle$ where $c$ is a cycle without \textbf{K} in $E$ based at
a vertex $u$, $E^{0}\setminus H=M(u)$ and $p(x)$ is an irreducible polynomial
in $K[x,x^{-1}].$ By Theorem \ref{gr(M) maximal}, $I(H,B_{H})$ is a maximal
graded ideal of $L$. So $L/I(H,B_{H})\cong L_{K}(E\setminus(H,B_{H}))$ has no
proper graded maximal ideal of $L$. So $(E\setminus(H,B_{H}))^{0}
=E^{0}\setminus H$ has no proper non-empty hereditary saturated subsets and so
no proper ideal of $L_{K}(E\setminus(H,B_{H}))$ contains any vertices.
Moreover, $c$ is a cycle without exits based at a vertex $u$ in $E^{0}
\setminus H$ and $v\geq u$ for all $v\in E^{0}\setminus H.$ The last condition
($E^{0}\setminus H=M(u)$) implies that $c$ is the only cycle without exits in
$E^{0}\setminus H.$ Using Lemma 3.5 of \cite{Ranga1}, we then conclude that
every proper ideal of $L/I(H,B_{H})$ is an ideal of the form $\langle f(c) \rangle$ where
$f(x)\in K[x].$ So if $q(x)$ is an irreducible polynomial in $K[x]$ different
from $p(x),$ then $\langle q(c)\rangle$ will be a maximal ideal of $L/I(H,B_{H})$ different
from $\langle p(c)\rangle$. Then $Q=I(H,B_{H})+\langle q(c)\rangle$ will be a maximal ideal of $L$ not
equal to $M$, contradicting the uniqueness of $M$. Hence $M$ must be a graded ideal.
\end{proof}

Let's illustrate the above theorem in different graphs. We start with a finite graph whose corresponding Leavitt path algebra has infinitely many ideals, but only a unique maximal ideal. 

\begin{example}\label{MaxgradedNoK} 
\rm
Let $E$ be the graph 
\medskip

$$ \xymatrix{
{\bullet}^u  \ar@(u,l) \ar@(d,l)   \ar@{->}[r]  & {\bullet}^v \ar@{->}[r]
 & {\bullet}^w \ar@(ur,dr)^{c} }
$$ 
\medskip

Then $E$ does not satisfy Condition (K), so the Leavitt path algebra on $E$ has 
both graded and non-graded ideals. Let $Q$ be the graded ideal generated by the hereditary saturated set $H=\{v,w\}$. $Q$ is a maximal ideal 
as $L/Q$ is isomorphic to the simple Leavitt algebra $L(1,2)$. By using \cite[Theorem 3.12]{Ranga1}, we classify the prime ideals in $L$. There are infinitely many non-graded prime ideals each generated by $f(c)$ where $f(x)$ is an irreducible polynomial in $K[x,x^{-1}]$ which are all contained in $Q$. Also, the trivial ideal $\{ 0\}$ is prime as $E$ satisfies condition MT-3. The prime spectrum of $L$ has a unique maximal element $Q$. 
\end{example}

\medskip

We now give an example of a graph with infinitely many hereditary saturated sets and the corresponding Leavitt path algebra has a unique maximal ideal which is graded.

\begin{example} \label{tersikikulak}
\rm
Let $E$ be a graph with $E^{0}=\{v_{i} :i=1,2,\ldots \}$. For each $i$, there is an edge $e_{i}$ with
$s(e_{i})=v_{i}$ and $r(e_{i})=v_{i+1}$ and at each $v_{i}$ there are two
loops $f_{i},g_{i}$ so that $v_{i}=s(f_{i})=r(f_{i})=s(g_{i})=r(g_{i})$. Thus
$E$ is the graph
\[ 
\xymatrix{  & \ar@{.>}[l]
\bullet_{v_3}\ar@(u,l)_{f_3} \ar@(u,r)^{g_3} 
& & \bullet_{v_2}\ar@(u,l)_{f_2} \ar@(u,r)^{g_2} \ar@/^.3pc/[ll]_{e_2}  && \bullet_{v_1}\ar@(u,l)_{f_1} \ar@(u,r)^{g_1} \ar@/^.3pc/[ll]_{e_1}}
\]
Now $E$ is a row-finite graph and the non-empty proper hereditary
saturated subsets of vertices in $E$ are the sets $H_{n}=\{v_{n},v_{n+1},\ldots\}$ 
for some $n\geq 2$ and $H_{n+1} \subsetneq H_n$ form an infinite chain under set inclusion and  
$H_{2}=\{v_{2},v_{3},\ldots\}$ is the maximal element in $\mathcal{H}_E$. 
The graph $E$ satisfies Condition (K), so all ideals are graded, generated by $H_n$ for some $n$. 
So $L_{K}(E)$ contains a unique maximal ideal $I(H_2)$. 
Note also that, $E^{0}\backslash H_{n}$ is downward directed for each $n$, 
thus all ideals of $L$ are prime ideals.
\end{example}

\medskip

We pause here to note an obvious corollary which is easily deducible.  

\begin{corollary}
If $I(H,B_{H})$ is a maximal ideal in $L_{K}(E),$ then $E^{0}\backslash H$
satisfies MT-3 condition.
\end{corollary}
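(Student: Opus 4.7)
The plan is to chain together two results already established: Corollary \ref{LPAmaxprime} (every maximal ideal of $L$ is prime) and Theorem \ref{RangaPrime} (the classification of prime ideals). Since $I(H,B_H)$ is given to be a maximal ideal, it must be prime, and since it is manifestly a graded ideal (being generated by an admissible pair), it has to fall into one of the three cases of Theorem \ref{RangaPrime}.

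First I would rule out the non-graded case (iii) of Theorem \ref{RangaPrime}: such a prime has the shape $I(H,B_H) + \langle f(c) \rangle$ for an irreducible polynomial $f$ and is not graded, whereas $I(H,B_H)$ is graded. Next I would rule out case (ii), in which the prime has the form $I(H,S)$ with $S = B_H \setminus \{u\}$ for some $u \in B_H$; this would force $S \subsetneq B_H$, contradicting the hypothesis that our prime is $I(H,B_H)$. Hence only case (i) remains, and its very statement is that $E^0 \setminus H$ satisfies the MT-3 condition, which is precisely the conclusion sought.

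There is essentially no obstacle here: the entire content of the corollary is packaged inside the classification theorem of \cite{Ranga1}, and the maximality hypothesis is used only to invoke primeness via Corollary \ref{LPAmaxprime}. The only small care required is to observe that the representation of a graded ideal by an admissible pair is unique (this follows from the lattice isomorphism between $\mathbf{H}$ and $\mathcal{L}_{gr}$ noted in the introduction), so that writing our ideal as $I(H,B_H)$ genuinely pins down the second coordinate as $B_H$ and excludes case (ii).
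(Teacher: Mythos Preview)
Your proposal is correct and follows exactly the same route as the paper, which simply writes ``By Corollary \ref{LPAmaxprime} and Theorem \ref{RangaPrime}, the result follows.'' You merely make explicit the case analysis that the paper leaves to the reader; note, incidentally, that even in case (ii) one has $E^{0}\setminus H = M(u)$, which is already downward directed, so ruling out that case is harmless but not strictly necessary.
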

\begin{proof}
By Corollary \ref{LPAmaxprime} and Theorem \ref{RangaPrime}, the result follows.
\end{proof}
\medskip

The converse of this corollary is not true. Consider the Example \ref{tersikikulak}, 
and $H_n= \{v_n, v_{n+1}, ... \}$ for $n \geq 3$, $E^0 \backslash H_n = \{v_1, v_2, ..., v_{n-1} \}$ satisfies MT-3 condition.  
(in fact, $E^0 \backslash H_n =M(v_{n-1})$. ) However, $I(H_n)$ for $n \geq 3$ is not a maximal ideal. 


\medskip
We finish this note with the discussion on the necessary and sufficient conditions for all maximal ideals to be graded. In \cite[Corollary 3.13]{Ranga1} it is proved that every prime ideal is graded if and only if the graph $E$ satisfies Condition (K). A natural follow-up question is to determine the conditions on the graph so that every maximal ideal is graded. Example \ref{ikikulak} shows that Condition (K) is not sufficient to assure the existence of any maximal ideals. Moreover, Condition (K) on the graph is not even necessary as Example \ref{MaxgradedNoK} shows. We are ready to state the necessary and sufficient conditions on the graph to assure that all maximal ideals are graded.

\begin{theorem}\label{EveryMaxGraded} Let $E$ be any graph. 
Then every maximal ideal is graded in $L_{K}(E)$ if and only if 
for every maximal element $H$ in $\mathcal{H}_{E}$, $E\backslash
(H,B_{H})$ satisfies Condition (L).
\end{theorem}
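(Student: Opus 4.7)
The plan is to deduce this theorem as a direct bidirectional contrapositive of Lemma \ref{existNon-graded}. That lemma precisely characterizes when a non-graded maximal ideal exists, so the present theorem just translates ``no non-graded maximal ideal exists'' into the graph-theoretic condition.

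For the forward direction, I would assume every maximal ideal of $L_K(E)$ is graded, pick an arbitrary maximal element $H \in \mathcal{H}_E$, and argue by contradiction. If $E\backslash(H,B_H)$ failed Condition (L), then applying the direct part of Lemma \ref{existNon-graded} to this $H$ would produce a non-graded maximal ideal $M$ with $M \cap E^0 = H$, directly contradicting the hypothesis. Hence $E\backslash(H,B_H)$ must satisfy Condition (L) for every such $H$.

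For the converse, I would assume that for every maximal element $H \in \mathcal{H}_E$ the quotient graph $E\backslash(H,B_H)$ satisfies Condition (L), and let $M$ be an arbitrary maximal ideal of $L_K(E)$. If $M$ were non-graded, then the converse portion of Lemma \ref{existNon-graded}, together with Theorem \ref{gr(M) maximal} and Theorem \ref{RangaPrime}, forces $H := M \cap E^0$ to be a maximal element of $\mathcal{H}_E$ with $E\backslash(H,B_H)$ failing Condition (L), contradicting the assumption. Therefore $M$ is graded.

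There is essentially no main obstacle here, since Lemma \ref{existNon-graded} has done all of the real work: the only thing to verify is that both implications of that lemma apply to the statement ``a maximal non-graded ideal of $L_K(E)$ exists'' phrased in terms of the existence of a suitable maximal $H \in \mathcal{H}_E$. The only subtlety worth flagging explicitly is that in the converse direction we do need the full strength of the characterization of non-graded maximal ideals (i.e.\ via Theorem \ref{RangaPrime}(iii) and the fact from Theorem \ref{gr(M) maximal} that $gr(M) = I(H,B_H)$ is already maximal among graded ideals, forcing $H$ to be maximal in $\mathcal{H}_E$), but these are all already established results that we may invoke directly.
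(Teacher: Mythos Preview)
Your proposal is correct and follows essentially the same approach as the paper: both directions are argued by contraposition using Lemma \ref{existNon-graded}, exactly as you describe. The only cosmetic difference is that the paper cites Lemma \ref{existNon-graded} directly in both directions without separately invoking Theorem \ref{gr(M) maximal} or Theorem \ref{RangaPrime}, since those results are already absorbed into the converse part of that lemma.
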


\begin{proof} 
$(\Leftarrow )$ Suppose for every maximal element $H$ in $\mathcal{H}_{E}$, $E\backslash
(H,B_{H})$ satisfies Condition (L). Then by Lemma \ref{I(H,B) maximal}, 
$I(H,B_{H})$ is a maximal ideal. However, if there exists a maximal
non-graded ideal $N$, then by Lemma \ref{existNon-graded}, $H:=M \cap E^0$ is a maximal element in 
$\mathcal{H}_{E}$ with $E\backslash (H,B_{H})$ does not satisfies Condition (L) which contradicts the hypothesis.  

$(\Rightarrow )$  
Take a maximal element $H$ in $\mathcal{H}_{E}$ with $E\backslash
(H,B_{H})$ containing a cycle $g$ with no exit. Then 
by Lemma \ref{existNon-graded} there exists a non-graded maximal ideal. 
This gives the required contradiction. 
\end{proof}

\bigskip 

We give an example that has both graded and non-graded maximal ideals to show that converse of 
Theorem \ref{UniqueMaxisGraded} is false.

\begin{example}\label{Maxgraded-nongraded-NoK} 
\rm
Let $E$ be the graph 
\medskip
$$ \xymatrix{
{\bullet}^u  \ar@(u,l) \ar@(d,l)    & {\bullet}^v \ar@{->}[r] \ar@{->}[l]
 & {\bullet}^w \ar@(ur,dr)^{c} }
$$ 
\medskip

Then the Leavitt path algebra on $E$ has both graded and non-graded maximal ideals. The set $\mathcal{H}_E$ is finite and hence any ideal is contained in a maximal ideal. The trivial ideal $\{ 0\}$ which is a graded ideal generated by the empty set, is not prime as $E$ does not satisfy condition MT-3. There are infinitely many non-graded prime ideals each generated by $f(c)$ where $f(x)$ is an irreducible polynomial in $K[x,x^{-1}]$ which all contain $\{ 0\}$.
Let $N$ be the graded ideal generated by the hereditary saturated set $H=\{u\}$ and in this case, the quotient graph $E \backslash H$ does not satisfy condition (L). By Lemma \ref{existNon-graded}, there are infinitely many maximal non-graded ideals each generated by $f(c)$ where $f(x)$ is an irreducible polynomial in $K[x,x^{-1}]$ which all contain $N$. 
Also, let $Q$ be the graded ideal generated by the hereditary saturated set $H=\{w\}$. In this case, the quotient graph $E \backslash H$ satisfy condition (L). By Lemma \ref{I(H,B) maximal}, $Q$ is a maximal ideal. 
The prime spectrum of $L$ has a infinitely many maximal ideals, one of them is graded, namely $Q$ and infinitely many are non-graded ideals whose graded part is $N$. 
\end{example}

{\bf Acknowledgement}
The authors are deeply thankful to Professor K.M. Rangaswamy for bringing up this
research question, for his helpful discussion and improvement in the proofs of Theorems \ref{gr(M) maximal}, and 
\ref{UniqueMaxisGraded}. Both authors also thank Duzce University Distance Learning Center (UZEM) for using its facilities while conducting this research.

\end{document}